\newtheorem{thm}{Theorem}%[subsection] % numbers theorems and lemmas accordingly
\newtheorem{deff}[thm]{Definition}
\newtheorem{cor}[thm]{Corollary}
\newtheorem{lem}[thm]{Lemma}
\mathchardef\hy="2D
\newcommand{\perm}[2]{\left\{ \begin{array}{l} {#1} \\ {#2} \end{array}   \right.}
\newcommand{\prmb}[2]{\left\{ \begin{array}{ll} {C_1:}  & {#1} \\  {C_2:}  &  {#2} \end{array}   \right.}
\begin{document}

\mainmatter

\title{How to draw combinatorial map? From graphs and edges to corner rotations and permutations}
\titlerunning{How to draw combinatorial map?}
\author{Dainis Zeps, Paulis \c{K}ikusts}
\authorrunning{ Dainis Zeps, Paulis \c{K}ikusts}
\institute{Institute of Mathematics and Computer Science, \\ University of Latvia, \\
29 Rainis blvd., Riga, Latvia\\
%\mails  \\
\url{dainize@mii.lu.lv}\\
\url{paulis@mii.lu.lv
}}

\toctitle{How to draw combinatorial map?}
\tocauthor{Dainis Zeps, Paulis \c{K}ikusts}

\maketitle

\begin{abstract}
In this article we consider combinatorial maps approach to graphs on surfaces, and how between them can be establish terminological uniformity in favor of combinatorial maps in way rotations are set as base structural elements and all other notions are derived from them. We set this approach as rotational prevalence principle. We consider simplest way how to draw combinatorial map, and ask how this approach in form of rotational prevalence could be used in graphs drawing practice and wider in algorithms. We try to show in this paper that the use of corners in the place of halfedges is much more natural than that of halfedges. Formally there is no difference between both choices, but corner approach is much more clear and concise, thus we advocate for that.
\end{abstract}

\section{Introduction}

We deal in this article with some simple considerations and observations  of how to draw combinatorial map, and how it comes in  connection with traditional drawing of graphs, fig. \ref{fig0}, see fig. \ref{fig3} in \ref{graph.surface} too.

\begin{figure}[h]
\begin{center}
\scalebox{.25}{\includegraphics{map.md}}
\scalebox{.45}{\includegraphics{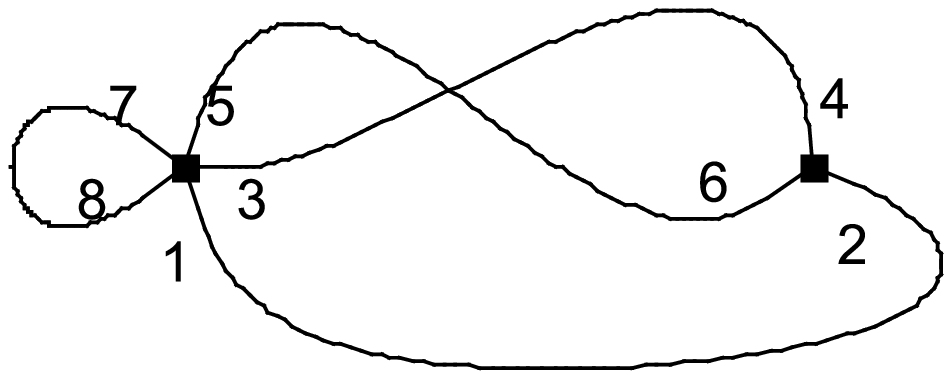}}
\caption{Images of the same combinatorial map drawn in the plane.}
\label{fig0}
\end{center}
\end{figure}.

To draw either graph or combinatorial map doesn't much differ if both require to come to picture of points and lines confining areas in the plane. Combinatorial map's approach suggest its own natural way how to come to its image and we try to follow directly this line. Moreover, we aim to build some base for graph's image what would follow from rotational approach, and, even more, establish it as sort of principle, calling it rotational prevalence when we go out from assumption that rotations in the graph defined as combinatorial map might serve as starting point for all other notions around the topology of the graph. We apply this approach in our case of building combinatorial map's drawer, accompanying with some questions  what would be helpful for graph drawing environments.

Combinatorial map theory is new area of combinatorics which development may have considerable impact on topological graph theory. Besides, combinatorial functions around these maps are both theoretical tools and effectively calculable means that could be developed in complex environments for applications. These directions, as it seems to us, are weakly developed, partially maybe due to fact that combinatorial maps and apparatus around them as if repeat all what is already present in topological graph theory. But combinatorial maps as branch of combinatorics may be investigated independently from graph theory using e.g. such simple tools as permutations, and use whatever results already achieved in e.g. permutational group theory. Astonishing results of application of constellations (what is some generalization of combinatorial maps) in Riemann geometry \cite{lz 04}  show that this area deserve to be researched with much more effort than before.

Combinatorial maps invention is attributed to  Tutte \cite{tu 73,tut 84}, developed by Stahl \cite{sta 80,sta 83,sta 88} and many other researchers, see e.g. \cite{lit 88,bo li 95,liu 94}. Combinatorial map approach is based on discoveries of rotations in graphs, see Heffter \cite{he 1891,heffter}, Edmonds \cite{edm 60} and Jacques \cite{ja 70}. These facts are researched by many, see e.g. \cite{gibl 77,whi 73,rin 74,moh 01}. One way to introduce combinatorial map is by pair of permutations, see \cite{bo li 95,lz 04}. Lando and Zvonkin in their book "Graphs on Surface and Their Applications"  \cite{lz 04} points at Jacques \cite{ja 70}, who first used these constructions under the name of constellations. Constellations as generalization of combinatorial maps are used by Lando and Zvonkin in Riemann geometry \cite{lz 04}, see \cite{pa zv 02} too. We do researches in these area since 1993 \cite{ze 93,kize,ze 94,ze 96,ze 97,ze1 97,ze 98,ze 99,ze 08,ze 10}, that is based on permutational representation of rotations.

Specifically, combinatorial maps as purely combinatorial structure could deserve wider application in graph theory and some of its applications as e.g. graph drawing, where combinatorial maps could serve as more basic structure than graph itself. Rotations could be the combinatorial objects on which all other graph-theoretical invariants could be represented. In this article we try to reconstruct such option with questioning how to implement these observations more directly.

We tried to built combinatorial map drawing tool where we used directly combinatorial map approach to depict its eventual visual image, see fig. \ref{fig3} in \ref{edge}, in the same time giving account to the fact that all programming tradition used by graph drawings is based on rather far from rotational way of thinking in combinatorics. The question we post in this article is  - Can {\em rotations before edges and vertices} approach be applied directly in graph drawings particularly with some sensible outcome that could serve for applications?

By developing tools for combinatorial map drawing and by giving ways to the visualization of combinatorial maps we hope  to widen the use of  them in topological graph theory.

\section{From graph on surface to set of rotations in the graph}

For purposes of this paper we consider combinatorial map as arbitrary pair of permutations.

Combinatorial map as purely combinatorial object naturally models graph (in general hypergraph) embedded into oriented surfaces. By discovery of Heffter \cite{he 1891,heffter} and reminder of Edmonds \cite{edm 60} we know that vertex rotation, i.e., rotational order of edges around vertex, taken for all vertices, fixes graph on some orientable surface. But, if we fix rotational order of edges in faces too, than these two rotations, vertex rotation and face rotation, are sufficient to code all the graph, i.e., no sets of vertices and edges are to be specified, because these two rotations already cipher the graph on some orientable surface.

Moreover, coding these rotations with permutations and performing operations within purely permutational calculus, we may maintain all operations upon graph in the same way, i.e., using operations on permutations, that usually is done in vertices-edges-faces operational framework.

There is very natural way how to come from vertices and edges  to permutational setting. Let graph $G=(V,E)$ be fixed by vertex rotation. If we depict each unoriented edge as pair of oriented edges in a way that outgoing edge comes first before incoming edge in rotation of edges around vertices in clockwise direction, we observe that oriented edges around faces are oriented all in one direction, and this direction is anticlockwise. See fig. \ref{fig1} left. Now we are to do one more abstract operation -- we replace the corner of the face that was formed by two oriented edges by halfedges, i.e., incoming head and outgoing tale, and call this new object {\em corner}. By the way, doing this, trivially, we replace double cover of borders of faces with corners of oriented edges with simple cover of corners of halfedges. See fig. \ref{fig1}.

\begin{figure}[h]
\begin{center}
\scalebox{.55}{\includegraphics{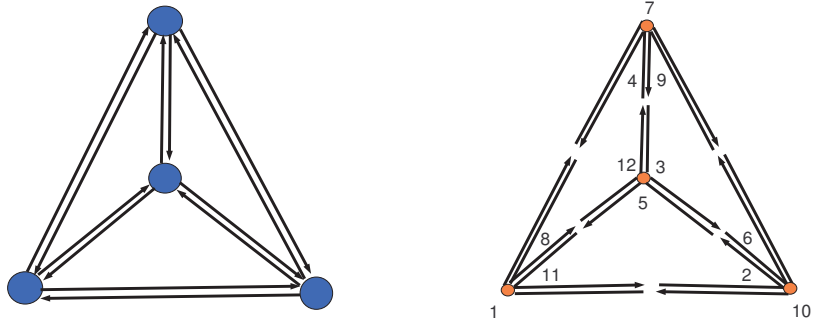}}
\caption{Graphs $K_4$ embedding in plane fixed by rotation of alternatively outgoing and incoming edges around vertices, left, and by vertex rotation and face rotation of corners of edges, right. Numbering corners we come to permutations, which stand for rotations. Corners around vertices represent permutation $(1 8 \bar{1})(2 6 \bar{0})(3 5 \bar{2})(4 7 9)$ and around faces permutation $(1 7 \bar{0})(2 5 \bar{1})(3 6 9)(4 8 \bar{2})$. Two edge rotations are respectively $(1 2)(3 4)(5 6)(7 8)(9\bar{0})(\bar{1}\bar{2})$ and $(1 4)(2 3)(5 8)(6 7)(9\bar{2})(\bar{0}\bar{1})$.}
\label{fig1}
\end{center}
\end{figure}

These corners are as many as oriented edges in the graph, i.e., double as many as edges in the graph. Of course, these corners cover all oriented edges only once. Now, by fixing vertex and face rotations we actually fix two corner rotations, correspondingly around vertices and faces. The size of both rotations is the same, that of number of corners. Actually two more rotations of the same size appear, i.e., two edge rotations, which we get if we establish new adjacency of corners across what former was edge. Edge rotations in graphs have only 2-cycles, thus corresponding permutations are involutions. In case of hypergraphs edge rotations may have orbits of arbitrary size, i.e., corresponding to size of hyperedge.

Now, in case of graphs with possibly hyperedges we have four rotations of equal size that fix this graph on surface and without any necessary additional information, of course. But, four rotations are redundant. Both edge rotations have the same cycle structure, or {\em passport} \cite{lz 04}. Besides, taking only three rotations, but all in one direction, say, anticlockwise, multiplication of them is equal to identity permutation, \cite{lz 04}. (For that reason combinatorial map is 3-constellation.) This means that third permutation always may be calculated from the two given. Now, in case of  graphs without hyperedges edge rotations are involutions without fixed points with respect to number of corners.  If we fix edge rotation once and for ever, because its passport is constant, we may calculate all permutations from only one given permutation except two edge rotations which don't fix particular graph but some larger class. We further should see that this class is the class of the knot, to which this graph belongs.

\subsection{From operations with graphs on surfaces to permutational calculus}

With having corners ($C$) introduced we directly and naturally come to permutational calculus. Corners are as many as oriented edges in the graph, so number of corners $\|C\|(=m)$ is equal to $2\|E_G\|$, i.e., for graphs without hyperedges this number is always even. Thus, corners we may label (or equate at least for practical purposes) with natural numbers from interval $[1..m]$. We have four permutations corresponding to four rotations of the same size $m$. By the way, identical permutation of degree $m$ as vertex rotation should have  graph consisting from $m/2$ isolated edges in correspondence.

\subsection{Hypergraphs and pairs of permutations}

By the way, for graphs we should have only even permutations, but in case of hypergraphs arbitrary permutations come before. To fix hypergraph we need three permutations, or, by fixing edge rotation, both vertex and face rotation. In general, two permutations always have some hypergraph in correspondence\cite{ze 97}. In \cite{ze 97} we call combinatorial maps corresponding to hypergraphs {\em partial maps} in opposition to {\em graphical maps} for corresponding to graphs without hyperedges. The name of partial map is motivated by the fact that partial map may be considered as map with cut out some faces, thus being as if partial with respect of that graphical map with restituted cut out faces.

\begin{figure}[h]
\begin{center}
\scalebox{.30}{\includegraphics{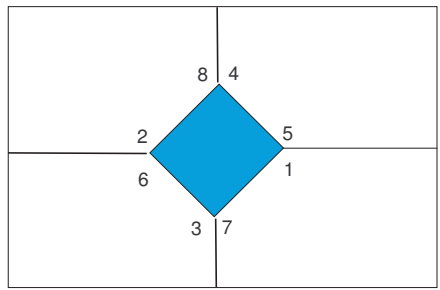}}
\caption{Example of partial map with vertex rotation $P=(1 5)(2 6)(3 7)(4 8)$, face rotation $Q=(1 7 45 2 8 9 6)$ and edge rotation $R=(1 4 2 3)(5 6)(7 8)$. One of edges, $(1 4 2 3)$, is not graphical but hyperdedge. See cover of two books \cite{lz 04} and \cite{ze 07}.}
\label{fig2}
\end{center}
\end{figure}

See in fig. \ref{fig2} example of hypergraph, graphs $K_4$ embedding on torus with cut out face. The cut out face stands for hyperedge of degree four. That we are to do with hypergraph we get from the fact that edge rotation is not graphical.

\section{Graphical combinatorial maps introduced}

 Now we are ready to start combinatorial map calculus, which are based on permutational calculus, where permutations act on elements of set $C$, that stand for corners earlier introduced.

 We multiply permutations from left to right. Graphical {\em combinatorial map} is pair of permutations,  $(P,Q)$, with $P \cdot Q^{-1} (=\rho)$ being involution (i.e., orbits only of length two) without fixed elements. $P$ and $Q$ are correspondingly {\em vertex rotation} and {\em face rotation}. For graphical combinatorial map we distinguish two edge rotations,  {\em inner edge rotation} $\pi=Q^{-1}P$ and {\em edge rotation} $\rho=P \cdot Q^{-1}$. We assume permutations acting on set of elements $C$, usually natural numbers from $1$ to $m$, $m=\|C\|$. We try to work within set of maps with fixed  $\pi$  calling them {\em normalized maps}. Mostly we use one particular choice of $\pi$ equal to $(1 2)$ ... $(2k-1$ $2k)$, $k\geq 1$. The same convention is used in \cite{lz 04}. Under these assumptions, graphical combinatorial map is characterized by one permutation or one rotation, say vertex rotation $P$.

 \subsection{One-one correspondence between permutations and graphs on surface}

 The fact we come to is worth to be appreciated specially. Two rotations fix combinatorial map, but under convention that inner edge rotation is fixed for some whole class of combinatorial maps it suffices only one permutation to specify graphical combinatorial map, see \ref{cmcsgs} lower. This feature is preserved under multiplication of permutations, as it is shown in \cite{ze 96}. As the result most of operations may be done within this convention.

 Permutational calculus and theory around permutations usually coming under name of permutational group theory is very developed, e.g. \cite{wie 64}. What does it mean that graphs on surface one-one correspond to permutations? One way to perceive this is to state that whatever theorem in permutations work equally in this class of graphs. One of our aims of this article is to turn attention on how to maintain this permutational calculus available in graph theoretical applications, thus giving eventual access to whatever useful in theory around permutations.

 However, there may appear applications where we need to go outside the class of normalized maps, but it is easy to bookkeep these situations, but use, whenever possible, benefits of normalized maps.

 \subsection{Drawing graph on surface}\label{graph.surface}

 A graph with loops and multiedges on an orientable surface corresponds to arbitrary graphical combinatorial map in very natural way. One intuitively well based way to persuade oneself about this is to draw directly this graph in the plane in the following way. Let first put as many points in the plane as orbits in the vertex rotation with {\em edge-ends}  clockwise around with corners following in cyclical order for each corresponding point, see fig.1, a. Further, let us unite two edge-ends with corners (following clockwise) $a$ and $b (=a^{\pi})$. I.e., corners that form orbit of inner edge rotation $\pi$ are to be united with an edge in the drawing, thus  justifying an option to speak about {\em edges of combinatorial maps}. It is easy to see that changing orientation from clockwise to anticlockwise we had to use edge rotation $\rho$ in place of inner edge rotation $\pi$. Thus, two opposite directions of rotations give two distinct edge rotations both in drawings and combinatorially.

\begin{figure}[h]
\begin{center}
\scalebox{.55}{\includegraphics{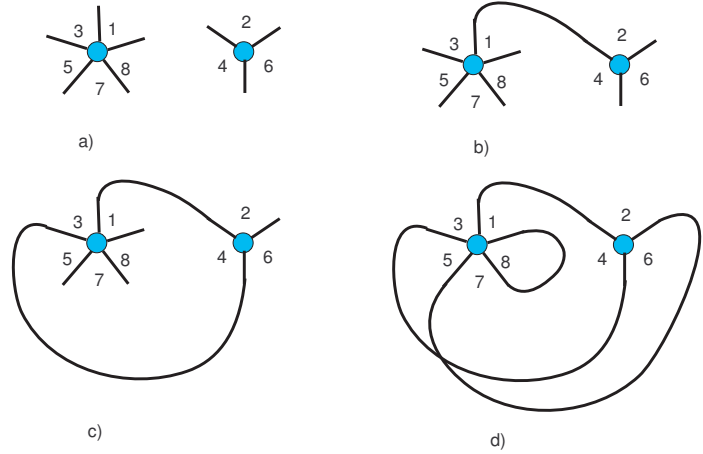}}
\caption{a) Putting two points in the plane with edge-ends and corresponding corners from orbits of the given permutation $(1 8 7 5 3)(2 6 4)$. b) Uniting edge-ends with corners 1 and 2. Notice the order edge-end and the corresponding corner in the clockwise direction. c) Uniting edge-ends 3 and 4. d) Uniting two left edge-end pairs. Notice that last "edge" of the map is loop.}
\label{fig3}
\end{center}
\end{figure}

 See e.g. fig. \ref{fig3} where combinatorial map  $P=(1 8 7 5 3)(2 6 4)$ is drawn. In a) we place points in the plane corresponding to orbits in vertex rotation. In b) to d) to get drawing of combinatorial map, we are to unite pairs of edge-ends $(2k-1, 2k)$ with curves for edges from $k= 1$ to $4$. In d) we have the drawing fulfilled. Notice that loops correspond to edge-ends that go out from the same point in the plane.

 Further examples of combinatorial map drawings performed with programmed map drawer see in section \ref{drawer}.

\subsection{Definition of edge of combinatorial map}\label{edge}

Higher we started to speak about {\em edge of combinatorial map} in some informal, intuitively based way.

Thus, we are motivated to consider  the edge of combinatorial map in some more formally way.

Let $\Omega_p$ and $\omega_p$ be correspondingly set of orbits and some orbit of  permutation $p$.

\begin{deff} Let for a given map $P,Q,\pi,\rho$ sextet e= $\langle \omega^1_P, \omega^2_P; a, b, c, d  \rangle$ be such that $a, b \in \omega^1_P$, $c, d \in \omega^2_P$, $(a c) \in \Omega_{\pi}$, $(b d) \in \Omega_{\rho}$ and $(a c)^P=(b d)$. We call $e$ edge of combinatorial map.
\end{deff}

Let us use some denotations for transitive permutations in cyclical form. For one corner orbit $(a)$ we use small letter, i.e., $a$, for arbitrary length orbit (or part of orbit) we use capital letter, i.e., A might be orbit or its part $(a_1, ..., a_k)$, $k \geq 0$.  In this manner $aA$ denotes some transitive permutation $p$ in cyclical form with corner $a$ being distinguished and rest part being $A$.

Using this convention we may write an edge of a map in the form $$\langle baA, dcC; a (=c^{\pi}), b (=d^{\rho}), c (=a^{\pi}),  d (=b^{\rho}) \rangle.$$ Let us fix this as a lemma.

\begin{lem} Orbits $(a c)$ and $(b d)$ correspondingly of inner edge rotation and edge rotation have edge $$\langle baA, dcC; a (=c^{\pi}), b (=d^{\rho}), c (=a^{\pi}),  d (=b^{\rho}) \rangle$$ of combinatorial map in correspondence with corresponding equalities being held.
\end{lem}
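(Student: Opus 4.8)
The plan is to prove the lemma by unwinding the definitions; the one structural fact doing the real work is the conjugacy of the two edge rotations. First I would record that, since $\pi = Q^{-1}P$ and $\rho = P\cdot Q^{-1}$, we have $P\,\pi\,P^{-1} = P\,Q^{-1}P\,P^{-1} = P\,Q^{-1} = \rho$ (and, symmetrically, $Q\,\pi\,Q^{-1}=\rho$ as well), so $\rho$ and $\pi$ are conjugate. Because for a graphical combinatorial map $\rho = P\cdot Q^{-1}$ is by definition a fixed-point-free involution, $\pi$ is one too; hence $(ac)\in\Omega_\pi$ and $(bd)\in\Omega_\rho$ are honest transpositions with $a\neq c$ and $b\neq d$, and $a^\pi=c$, $c^\pi=a$, $b^\rho=d$, $d^\rho=b$ hold automatically. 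Two of the four asserted equalities, $a = c^\pi$ and $c = a^\pi$, are therefore already in hand.

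Next I would pin down the labeling forced by the cyclic form. Starting from the $\pi$-orbit $(ac)$, let $b$ be the unique corner with $b^P = a$, equivalently $b = a^{P^{-1}}$, and $d$ the corner with $d^P = c$, i.e. $d = c^{P^{-1}}$; by construction the $P$-cycle through $a$ then also contains $b$ and can be written $\omega^1_P = baA$ for a suitable (possibly empty) block $A$, and likewise $\omega^2_P = dcC$ from $d^P=c$. It remains to check that this $(bd)$ is the orbit of $\rho$ attached to $(ac)$ and that the remaining two equalities hold. For the first, the conjugacy $\rho = P\,\pi\,P^{-1}$ transports the $\pi$-orbit $\{a,c\}$ onto the $\rho$-orbit $\{a^{P^{-1}},c^{P^{-1}}\}=\{b,d\}$, which is exactly the matching condition $(ac)^P=(bd)$ of the Definition. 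For the second, I would compute directly, using the left-to-right product convention: $d^\rho = (c^{P^{-1}})^{P\,Q^{-1}} = c^{Q^{-1}}$, and rewriting $(ac)\in\Omega_\pi$ as $(c^{Q^{-1}})^P = c^\pi = a$ gives $c^{Q^{-1}} = a^{P^{-1}} = b$, hence $d^\rho = b$; the mirror computation gives $b^\rho = a^{Q^{-1}} = c^{P^{-1}} = d$. With all six slots of the sextet checked against the Definition, $\langle baA,\,dcC;\,a,b,c,d\rangle$ is an edge of the map and carries the stated equalities, which is the lemma.

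I do not expect a genuine obstacle here — the proof is bookkeeping inside the permutation algebra — so the ``hard part'' is only a matter of care. The subtle point is choosing the transport direction correctly: $(ac)$ must be carried to $(bd)$ by $P^{-1}$ on corners (this is what $\rho = P\,\pi\,P^{-1}$ and the left-to-right product convention dictate), and then one must confirm that this single choice of $b,d$ is compatible with \emph{both} the $\pi$-side and the $\rho$-side of the sextet at once; that compatibility is precisely the identity $c^{Q^{-1}} = a^{P^{-1}}$ (together with its mirror), which is nothing but $(ac)\in\Omega_\pi$ re-expressed. A minor care point worth a remark is the degenerate situation where $a$ is a fixed point of $P$ (a free edge-end) or where $\omega^1_P = \omega^2_P$ (a loop): the argument is unchanged, but the shorthand $baA$, $dcC$ then has to be read with $b=a$ and $A$ empty, respectively as two rotations of one and the same $P$-cycle.
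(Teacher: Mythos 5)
Your proof is correct: the conjugacy $\rho = P\,\pi\,P^{-1}$ is exactly what the definition's condition $(ac)^P=(bd)$ encodes, and your computations $d^\rho = c^{Q^{-1}} = a^{P^{-1}} = b$ (with its mirror) verify all four stated equalities, in agreement with the paper's worked example $P=(1\,8\,7\,5\,3)(2\,6\,4)$, $e=\langle 1,3,2,4\rangle$, where indeed $b=a^{P^{-1}}=3$ and $d=c^{P^{-1}}=4$. The paper gives no proof at all --- it states the lemma as an immediate reformulation of its definition and cyclic-notation convention --- so your careful unwinding supplies precisely the bookkeeping the authors left implicit, including the degenerate isolated/hanging edge and loop cases they only mention afterwards.
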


We must observe that the edge we speak about turns actually into a loop in case orbits $baA$ and $dcC$ coincide. But corners $a,b,c,d$ all should be distinct nevertheless except in cases of isolated or hanging edges or loops. So isolated edges or loops has two distinct corners, $\langle a, b; a, a, b, b \rangle$ being isolated edge, and $\langle ab, ab; a,b,b,a \rangle$ being isolated loop. Hanging edge or loop has three distinct corners, e.g., $\langle ba, c; a,b,c,c \rangle$ being hanging edge, and $\langle abc, cab; a,b,c,b \rangle$ being hanging loop.

E.g., for normalized map $P=(1 8 7 5 3)(2 6 4)$ $e=\langle (1 8 7 5 3), (2 6 4); 1, 3, 2, 4 \rangle$ is an edge, comp. fig. \ref{fig3}. In case we don't want to specify rotations from where edges go out we use shorter form, quartet notation only for four corners, e.g., we write for edge $e=\langle 1, 3, 2, 4 \rangle$.

\subsection{Graphs on surfaces}

Usually treating graphs on surface we start from graph $G=(V,E)$ and then supply it with cyclical succession of neighboring edges, i.e., with some permutation, say, $\Pi$ that works on set of unoriented edges. It turns out that graph on surface might be considered as simpler structure than graph supplied with rotations, i.e., rotations of vertices and faces already fix all graph-on-surface structure without taking much concern on what kind on surface "graph" was "embedded".

This may be adequately taken into account by fixing graph on surface by {\em corners between neighboring edges} in rotations. Let graph $G$ be fixed by edge rotations around vertices. Let C be set of corners between edges and two rotations (or actually  permutations) are given: rotation of corners around vertices $R_V$ and rotation of corners around faces $R_F$. It is easy to see that now some pairs of corners $(c_1, c_2)$ are {\em connected via edge} in the way that $c_1^{R_V^{{R_F}^{-1}}}=c_2$. This fact is sufficient and crucial to establish one-one correspondence between graphs on surface and combinatorial maps.

\begin{lem} For a graph $G$ on surface with given rotations of corners of neighboring edges around vertices and faces $R_V$ and $R_F$ the pair of permutations $(R_V,R_F)$ is graphical combinatorial map.
\end{lem}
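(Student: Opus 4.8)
The plan is to reduce the statement to the single defining condition of a graphical combinatorial map. Since $R_V$ and $R_F$ are, by hypothesis, permutations of the common set $C$ of corners between neighbouring edges, it suffices to show that $\rho:=R_V\cdot R_F^{-1}$ is an involution of $C$ without fixed points; then $(R_V,R_F)$ is automatically a graphical combinatorial map, with vertex rotation $R_V$, face rotation $R_F$, edge rotation $\rho$, and inner edge rotation $\pi:=R_F^{-1}R_V$. The relation $c_1^{\,R_V R_F^{-1}}=c_2$ singled out just before the lemma as ``connected via edge'' already says that $\rho$ is exactly the corner pairing induced by the edges of $G$, so the whole content is that this pairing is a genuine, fixed-point-free matching --- and this is the point where the embedding, as opposed to an arbitrary pair of permutations, is really used.

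First I would pin down the local combinatorial meaning of $R_V$ and $R_F$. Because $G$ carries a (cellular) embedding, each corner $c$ occupies an angular sector at some vertex $v$, between two consecutive edge-ends; let $e(c)$ denote the edge-end immediately following $c$ clockwise at $v$. Then $R_V$ sends $c$ to the next corner of $v$, and $R_F$ sends $c$ to the next corner encountered along the face occupying $c$'s sector. Writing $c'=c^{R_V}$ and $c''=(c')^{R_F^{-1}}=c^{\rho}$ and tracing these two steps: the first step moves to the corner lying on the far side of $e(c)$ at the same vertex $v$, and the second step walks one edge \emph{backwards} along the incident face, which traverses $e(c)$ and lands at the corner $c''$ incident to $e(c)$ at its other endpoint. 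The relation ``$c''$ is reached from $c$ by crossing $e(c)$'' is symmetric between the two corners the embedding associates with the edge $e(c)$: running the same recipe ($R_V$ then $R_F^{-1}$) from $c''$ retraces $e(c)$ and returns to $c$, since $e(c'')=e(c)$. Hence $\rho^{2}=\mathrm{id}$, and the orbits of $\rho$ are precisely the corner pairs of the edges of $G$; as $G$ has no hyperedges these orbits have length two. A consistency check is the edge lemma above: the orbit $(a\,c)$ of $\pi$ and $(b\,d)$ of $\rho$ assemble into one sextet $\langle baA,dcC;a,b,c,d\rangle$, so reading $\rho$ off all such sextets reproduces $R_V R_F^{-1}$.

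It remains to exclude fixed points. A fixed point $c^{\rho}=c$ would force the edge $e(c)$ to meet a single corner from both of its ends, which cannot happen in a graph: even in the degenerate configurations --- pendant edge, loop, pendant loop, isolated edge, isolated loop --- the edge is flanked by two \emph{distinct} corners, exactly as recorded by the quartet/sextet forms listed in \ref{edge} (e.g.\ $\langle a,b;a,a,b,b\rangle$ and $\langle ab,ab;a,b,b,a\rangle$). Running through this short finite list finishes the argument. Equivalently, one may invoke the standard dart description of a graph cellularly embedded in an oriented surface, in which the rotation $\sigma$, the edge involution $\alpha$ and the face permutation $\varphi$ satisfy $\sigma\alpha\varphi=\mathrm{id}$ with $\alpha$ fixed-point-free, cf.\ the $3$-constellation remark and \cite{lz 04}; matching $R_V$ with $\sigma$ and $R_F$ with $\varphi$ under the paper's left-to-right conventions identifies $\rho=R_V R_F^{-1}$ with $\alpha$ and yields both properties at once. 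Either way $\rho$ is a fixed-point-free involution, so $(R_V,R_F)$ is a graphical combinatorial map.

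I expect the real work to lie entirely in the second paragraph: turning the picture ``one step clockwise around a vertex followed by one step back around a face crosses exactly one edge'' into a rigorous statement by committing to an explicit bijection between corners and darts and checking that it intertwines $R_V$ and $R_F$ with the dart rotation and the face operator --- uniformly across loops, pendant vertices, and the case where $e(c)$ is drawn as a loop so that its two endpoints coincide. Once that local dictionary is fixed, the involution identity and the absence of fixed points are immediate, and the reduction of the first paragraph closes the proof.
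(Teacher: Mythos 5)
Your proposal is correct and takes essentially the same route as the paper: the paper's proof likewise reduces everything to showing $\rho=R_V\cdot R_F^{-1}$ is a fixed-point-free involution, and establishes this by observing that $\rho$ is the ``cross the edge'' map on corners, which is symmetric (applying it from the edge-adjacent corner returns to the start) and never sends a corner to itself. Your write-up simply makes explicit the local vertex/face step analysis and the degenerate cases that the paper's three-sentence argument leaves implicit.
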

\begin{proof} To prove this assertion it suffices to show that multiplication $R_V \cdot R_F^{-1}$ is involution without fixed points. But for arbitrary corner $x$ its {\em edge-adjacent} corner $x^{R_V^{{R_F}^{-1}}}=y$ is some other corner at some vertex and applying backwards we get that edge-adjacent corner to $y$ is $x$, i.e., $y^{R_V^{{R_F}^{-1}}}=x$. But this means that multiplication $R_V \cdot R_F^{-1}$ is involution without fixed points.
\end{proof}

To conclude we state that {\em edge of combinatorial map} $(R_V,R_F)$ may be interpreted as edge of graph $G$ on surface fixed by this pair of rotations $(R_V,R_F)$.

\begin{cor} For combinatorial map $(R_V,R_F)$ corresponding to graph $G$ on surface sextet
$$\langle baA=\omega_{V_G}, dcC=\omega_{V_F},a=c^{{R_V^{-1}}^{R_F}}, b=d^{R_V^{{R_F}^{-1}}},c=a^{{R_V^{-1}}^{R_F}}, d=b^{R_V^{{R_F}^{-1}}} \rangle$$
 is map's edge that has edge $(b,d)$ uniting corners $b$ and $d$ of graph $G$ on surface in correspondence.
\end{cor}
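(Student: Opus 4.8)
The plan is to read the Corollary as a direct specialisation of machinery already in place: the Definition of an \emph{edge of combinatorial map}, the earlier Lemma exhibiting the explicit sextet $\langle baA,\,dcC;\,a(=c^{\pi}),\,b(=d^{\rho}),\,c(=a^{\pi}),\,d(=b^{\rho})\rangle$ attached to an orbit pair $(a\,c)\in\Omega_{\pi}$, $(b\,d)\in\Omega_{\rho}$, and the immediately preceding Lemma asserting that $(R_V,R_F)$ is a graphical combinatorial map. So the first step is just a change of notation: set $P:=R_V$, $Q:=R_F$, whence the inner edge rotation is $\pi=Q^{-1}P$ and the edge rotation is $\rho=PQ^{-1}$; one then checks that the exponents ${R_V^{-1}}^{R_F}$ and $R_V^{{R_F}^{-1}}$ appearing in the displayed sextet are exactly $\pi$ and $\rho$ rewritten through $R_V,R_F$. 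Here one uses that $\rho$ is an involution without fixed points by the preceding Lemma, and that $\pi=Q^{-1}\rho\,Q$ is a conjugate of $\rho$ and hence also an involution, so that various superficially different orderings of the factors in these exponents all coincide.

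The second step is to verify that the displayed object really satisfies the Definition. Take an arbitrary corner $b$, put $d:=b^{\rho}$ --- its \emph{edge-adjacent} corner, which exists and is distinct from $b$ outside the isolated/hanging degeneracies, exactly as in the proof of the preceding Lemma --- and let $a:=b^{R_V}=b^{P}$ and $c:=d^{R_V}=d^{P}$, so that the $P$-orbits through $b$ and $d$ have the forms $baA$ and $dcC$ required by the sextet. It remains to see that $(a\,c)\in\Omega_{\pi}$ and $(a\,c)^{P}=(b\,d)$. Both follow from the identity $\pi=P^{-1}\rho\,P$: conjugation by $P$ relates the $2$-orbits of $\rho$ and of $\pi$, and the choices $a=b^{P}$, $c=d^{P}$ make the $2$-orbit of $\pi$ attached to $(b\,d)$ come out as exactly $(a\,c)$; the remaining equalities $a=c^{\pi}$, $c=a^{\pi}$, $b=d^{\rho}$, $d=b^{\rho}$ then drop out from the definitions of $a,c,d$ together with $\pi,\rho$ being involutions. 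This shows the sextet is an edge of the combinatorial map $(R_V,R_F)$ in the sense of the Definition.

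The third step handles the ``in correspondence'' clause, and here I would simply appeal to the interpretation already established inside the proof of the preceding Lemma: the relation $x^{R_V^{{R_F}^{-1}}}=y$, i.e.\ $x^{\rho}=y$, is precisely the statement that corners $x$ and $y$ are \emph{connected via an edge} of the graph $G$ on the surface. Consequently the $2$-orbit $(b\,d)$ of $\rho$ is an actual edge of $G$, incident to the corner $b$ in the vertex orbit $\omega^1_{V_G}=baA$ and to the corner $d$ in $\omega^2_{V_G}=dcC$ (and turning into a loop exactly when these two $P$-orbits coincide), so the map's edge $e$ is put in correspondence with the graph edge $(b,d)$ uniting corners $b$ and $d$. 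That is the assertion of the Corollary.

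I do not expect any genuine obstacle --- it is a corollary --- and the only real care needed is notational bookkeeping: keeping the left-to-right multiplication convention straight when reading $baA$ and $dcC$ as cyclic permutations (so that $b^{P}=a$, $d^{P}=c$), getting the orientation right in $(a\,c)^{P}=(b\,d)$ rather than in its inverse, matching the exponent expressions ${R_V^{-1}}^{R_F}$ and $R_V^{{R_F}^{-1}}$ to $\pi$ and $\rho$ (several forms coincide because $\pi,\rho$ are involutions), and observing that the four degenerate shapes listed after the Definition --- isolated edge, isolated loop, hanging edge, hanging loop --- are covered by the same argument with the relevant corners allowed to coincide.
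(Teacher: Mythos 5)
Your proposal is correct and matches the paper's intent: the paper offers no explicit proof of this Corollary, presenting it as an immediate specialisation of the edge Definition, the sextet Lemma, and the preceding Lemma that $(R_V,R_F)$ is a graphical map, which is exactly the route you take (including the key observation that the stacked exponents ${R_V^{-1}}^{R_F}$ and $R_V^{{R_F}^{-1}}$ are just $\pi$ and $\rho$ written as products, as the paper itself does in the preceding proof). Your explicit verification that $a=b^{P}$, $c=d^{P}$, $d=b^{\rho}$ yields $(a\,c)\in\Omega_{\pi}$ with $(a\,c)^{P}=(b\,d)$ is sound and in fact supplies detail the paper omits.
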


\subsection{Combinatorial map as combinatorial structure of graph on surface}\label{cmcsgs}

We may establish one-one correspondence between graphs on surfaces and graphical combinatorial maps. To get it we are to take set of corners $C$ that should serve as set of elements of combinatorial maps. Further, we fix as arbitrary graph on surface pair of rotations, for vertices and faces correspondingly, say, $R_v (=P)$ and $R_F (=Q)$. Now, inner edge rotation should be $\pi = P^{-1} \cdot Q$ and edge rotation $\rho=P \cdot Q^{-1}$. Now, combinatorial map base assumptions says that graph on surface is well defined whenever edge rotation $\rho$ is involution without fixed points. This means that under these assumptions we have established one-one correspondence between graphs on surfaces and combinatorial maps.

In \cite{ze 96} we saw that we may fix one of edge rotation, say, inner edge rotation and work, whenever possible, within this constraint. Rightly, we may get along with one edge rotation, second edge rotation actually as variable being superfluous. We chose to fix inner edge rotation, and, by the way, in a very convenient way, fixing it $(1 2)(3 4)(5 6)... (2m-1 2m)$. Such maps we call normalized maps. Further, in \cite{ze 10} we see that we may go on with normalization and normalize knot in the map too. Then corresponding graphs on surfaces behave in complete correspondence with maps and we don't loose one-one correspondence between maps and graphs on surfaces.

All this taking into account we are not to be too specific to specify always what we have in mind when we speak, say, about corners, or some else aspect, either corresponding to maps or graphs on surfaces: we always use map-graph-on-surface correspondence.

\subsection{Zig-zag walk and the knot}

Let us perform zig-zag walk in the graph on surface. Taking corner $c$ let us go on with applying $\pi$ and $\rho$ alternatively and we are to return to corner $c$, see \cite{ze 10}. I.e., this routine is cyclical. If we haven't exhausted all corners we take some free one and go on. In this way we get another rotation for a graph on surface, or map, that we call zig-zag walk for graphs on surface, and knot for maps. However, this rotation is not unique, i.e, it has $2^k$ possible values for $k$ orbits in the knot. It is easy to see that each change of orientation of some orbit of knot gives another well defined value of knot.

If we fix one particular value of knot we actually divide set of corners $C$ into two equal size parts $C_1$ and $C_2$ so that we may express this particular knot as  $\mu=\prmb \pi \rho$, where the expression $\mu=\prmb p q$ say, that within set $C_1$ we apply $p$, and within $C_2$ we apply $q$.

Let us color corners into two colors according their belonging either to $C_1$ or $C_2$. Usually we use for $C_1$ green color and for $C_2$ red color.

 Particular choice of knot $\mu$ partitions $\pi$ into $\pi_1 \cdot \pi_2$, where we call $\pi_1$ {\em cut edges} and $\pi_2$ {\em cycle edges}.   For {\em cut edge} $\langle a, b, c, d \rangle$ corners $a, b$ receive one color and $c, d$ another color. Correspondingly, for {\em cycle edge} $\langle a, b, c, d \rangle$ corners $a, d$ receive one color and $b, c$ another color.   Thus we have that $P \cdot \pi_1:C_1\mapsto C_2$ or $P \cdot \pi_1:C_2\mapsto C_1$ , i.e., changes color, and $P \cdot \pi_2:C_1\mapsto C_1$ or $P \cdot \pi_2:C_2\mapsto C_2$, i.e., retains color. In \cite{ze 96} was shown that by fixing $\mu$ map $P$ may be expressed as multisplication $\gamma_1 \cdot \gamma_2 \cdot \pi_2$, where $\gamma_1$ acts within $C_1$ and $\gamma_2$ acts within $C_2$. From this we get formula for knot $\mu$. $\mu=\gamma_2 \cdot \pi \cdot \gamma_1^{-1}$, see \cite{ze 10}. Finally, in general we get $$\mu=\prmb \pi \rho=\perm {C_2:\gamma_2 \pi \gamma_1^{-1}} {C_1:\gamma_1 \rho \gamma_2^{-1}}.$$

 One might ask what is knot for graph on surface? In \cite{ze 10} we show that actually the equivalence class of zig-zag walks is graph's on surface invariant. To come closer to the knot and what it means we may observe that partial map $(P, \mu)$ has edge rotation $Q$, i.e., this partial map, which face rotation is $\mu$, is hypergraph with edges these of faces of original map. This fact says that fixing graph $G$ on surface fixes another hypergraph with the same vertex rotation but hyperdeges these of faces of $G$, in which case faces of this hypergraph should be knot of the graph.

 \subsection{Knotting of the map}

Fixing knot $\mu$ for map $P$ we get by multiplying permutations new rotation for maps $\alpha$, i.e., $$P=\mu \cdot \alpha.$$
We call this rotation $\alpha$ knotting of map because of its peculiar features. Knotting belongs to selfconjugate class of maps, i.e., $\alpha^{\pi}=\alpha$, which form a group against multiplication\cite{ze 96}.

Knotting $\alpha$ is responsible for edge structure of maps\cite{ze 10}. To see this we define knotting's symmetric form $A=\alpha \cdot \pi_1=\gamma_1 \cdot \gamma_1^{-1}$, involution $\delta=\pi^{\gamma_1}$ and {\em edge structuring knot} $\varepsilon=\prmb \pi \delta$. In \cite{ze 10} we show that $\varepsilon^2\cong A$, i.e., one of square roots of knotting's symmetric form is equal to edge structuring knot.

\subsection{Normalizing knots}

In \cite{ze 96} we considered class of maps with fixed knot. Actually this class has fixed both inner edge rotation $\pi$ and edge rotation $\rho$. Such class may be called class of maps with normalized knot. We add one more restriction: normalized knot should have corners in augmented order.

In fig. \ref{fig6} we see map with normalized knot. Odd corners have received one color and even corners another. But to get this feature we don't need to knot be normalized. It suffices for it to be only {\em partially normalized}, i.e., all edge should appear in the knot in the same order, that is, for all edges its less valued corner should come before, or reversely. In \cite{ze 10} we show  for knot and edge structuring knot that if one of them is normalized then other is partially normalized.

\subsection{Combinatorial adjacency of corners}

By drawing graph we use vertices, edges and faces. Replacing these three types of objects with rotations we retain minimally necessary for what is the picture of the graph in the essence. One more way to see this clearly is to notice that combinatorial maps may be treated as adjacency relations of corners of halfedges of three types, vertex, face and edge adjacencies, see fig.\ref{fig4}. Taking corners of two halfedges we may assemble them into rotation in three ways, see fig. \ref{fig5}: a) around common apex of corners, forming vertex rotation, b) by adjacing halfedges in way forming face rotation, and c) by flipping one corner to its mirror image and adjacing halfedges in way forming an edge rotation. In this we may replace vertices, edges and faces with adjacencies of corners of three types, and do just the same in replacing graphs with rotations in combinatorial maps.  Let us name these three adjacencies correspondingly p-adjacency, q-adjacency and r-adjacency.

\begin{figure}[h]
\begin{center}
\scalebox{.35}{\includegraphics{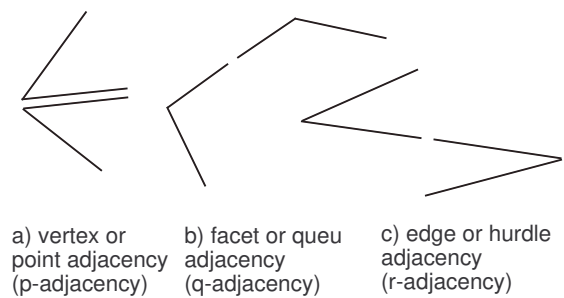}}
\caption{Combinatorial adjacency of corners. }
\label{fig4}
\end{center}
\end{figure}

Let us say that map is fixed by p-adjacency and q-adjacency if rotations $P$ and $Q$ are given. Thus, normalized maps are fixed by p-adjacency or q-adjacency, r-adjacency being fixed for whole class of maps. Knot appear to us as clockwise anticlockwise alternation of r-adjacency, or {\em left-right r-adjacency}. Edge structuring knot is as much as possible right along forwarded edges alternation of r-adjacencies. But this last in its name fixed behavior of edge structuring knot reveals itself only in case it is at least partially normalized.

\begin{figure}[h]
\begin{center}
\scalebox{.40}{\includegraphics{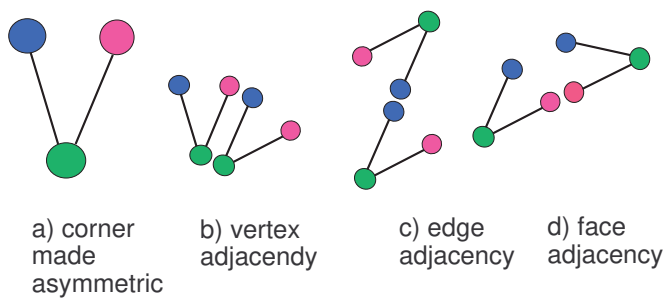}}
\caption{Combinatorial adjacency of corners symmetrized with asymmetric corner. Asymmetry in corner made by marking vertex or red vertex, edge or blue vertex and face or red vertex. Uniting in rotations green, blue and red vertices we get correspondingly vertex, edge and face adjacency. Notice that in face adjacency one corner is flipped by mirror reflection.}
\label{fig5}
\end{center}
\end{figure}

In \cite{lz 04} similar result is achieved via canonical triangulation of faces of the graph and getting three involutions, see Construction 1.5.20 on page 49, \cite{lz 04}. We may do the same by making our corner asymmetric by labeling it with three vertices of different color, apex with green label, calling it vertex's vertex, left to apex with blue label, calling it edge vertex, and right to apex with red label, calling it face vertex. Now we make three color rotations, green rotation for vertex, blue rotation for edges and red rotation for faces. Now we have come to complete symmetry, what complies with symmetry in rotations in combinatorial map.

By the way, all considerations in this chapter persuades us that the choice to number corners in the graph picture to get combinatorial map is as legitime operation as replacing the drawing of the graph with combinatorial map.

\subsection{Constellations  as combinatorial structure of Riemann surfaces \cite{lz 04,pa zv 02}}

Rotational approach gives direct generalization from Euclidean plane to ramified  Riemann sphere \cite{lz 04,pa zv 02} where we are to imagine graph being drawn on oriented surface $X$ of arbitrary genus that serves as domain of ramified covering $f$ of sphere $S^2$, $f:X \rightarrow Y$.

How to use  it here, and what it could mean for us here? This says that rotational structures are more basic and before choice of where and how we are to perform our picture's structure itself. We are as if before choice either to draw one nontrivial pictures on trivial surface, or many trivial pictures on sheets of nontrivial surface. But maybe, for technical reasons, we may get used to something in the middle, i.e., to divide global picture in smaller ones each of which we put on separate sheets. The use of rotational approach is quite insensible of where we are going to choose these middle points. But this might be only one of perspectives of rotational prevalence. Thus, in projecting whatever around rotational system we are to place them in the base, both rotations, operations with them and all apparatus around them, and only after all the rest.

\section{On rotational prevalence}

All before was told with the aim to show that rotations and permutations may be basic units of some combinatorial structures. In this order we would like to speak about rotational prevalence that would suggest to use these rotational aspects whenever and however necessary according their prevalence in theoretical settings.

\subsection{Drawing maps}

\begin{figure}[h]
\begin{center}
\scalebox{.55}{\includegraphics{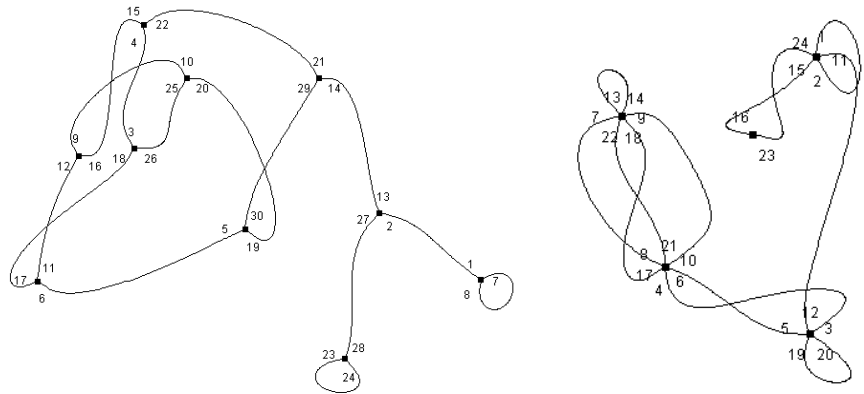}}
\caption{Two examples of randomly generated maps. Drawings may make maps look beautiful, as we see!}
\label{fig7}
\end{center}
\end{figure}

If one used to combinatorial maps comes to someone graph drawing specialist with request to draw a map the latter asks the former to prepare graphical code for his/her combinatorial map. Doesn't it occur to him/her that combinatorial map is a simpler structure than the code of the graph to be drawn? This situation is paradigmatic, and requests to be cured somehow, or at least to ask how to teach graph drawing community to learn more about rotational systems  in order to make rotational prevalence we speak about in this article as working principle.

\begin{figure}[h]
\begin{center}
\scalebox{.23}{\includegraphics{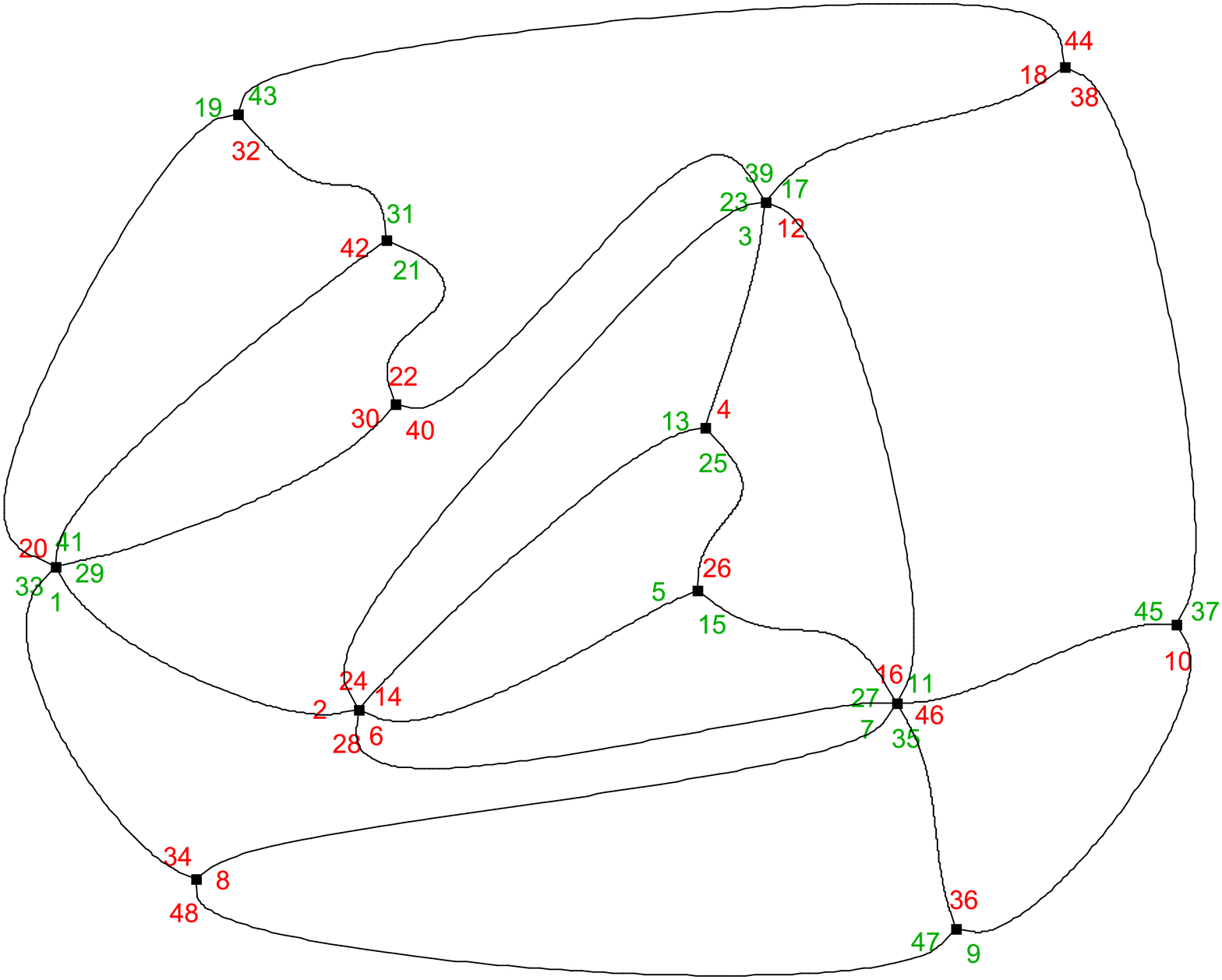}}
\caption{Example of  combinatorial map with  normalized knot. Cover of set of corners with green and red cycles clearly seen. Corresponding graph is a 4-critical multiwheel\cite{ze 12}.}
\label{fig6}
\end{center}
\end{figure}

What would be that that first had to teach to the second in case one needs to draw a graph?
One way is to work according the schema given in this article, i.e., code the graph as two rotations of corners and calculate edges of the corresponding map. It would be formal implementation of rotational prevalence. But it wouldn't give much if we came to necessity to start to implement rotations as points, lines and bordered areas in the plane, that would correspond to the elements of the pictures of the graphs we are used to. Would these last objects  to be redesigned in terms of some rotational geometry that with such easy could be done in case of Riemann geometry? In graph's rotational geometry classes of vertices, edges and faces don't differ, that is not the case for their opposites - points, lines and areas.

On the other hand, if we follow the line how rotational prevalence would work on level of theoretical considerations, we might keep as close as possible to it, and turn our attention on how to develop this ground level of rotational calculus. Possibility to visualize maps and especially partial maps should be very useful in these efforts.

\begin{figure}[h]
\begin{center}
\scalebox{.25}{\includegraphics{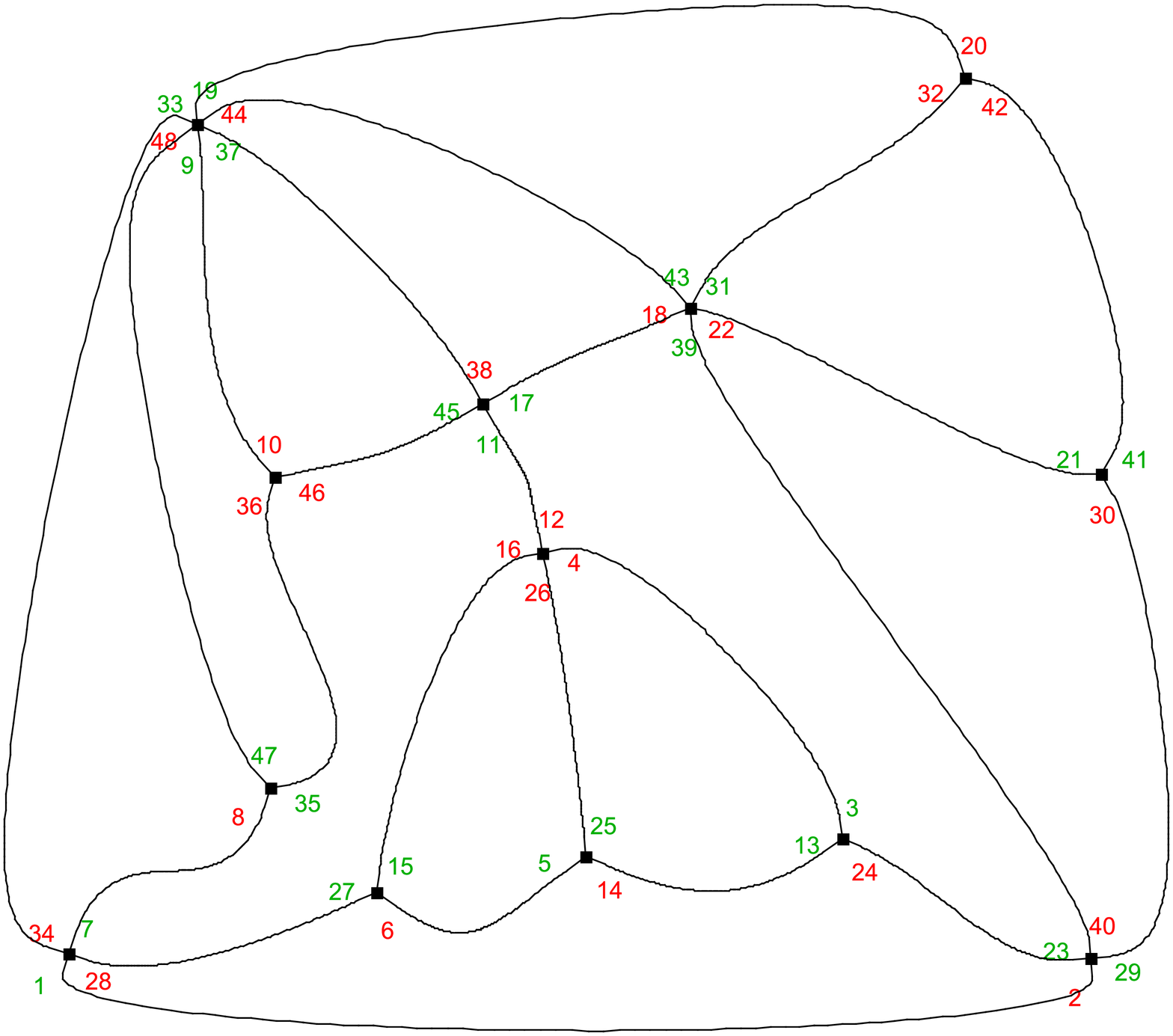}}
\caption{Dual map of previous example.}
\label{fig8}
\end{center}
\end{figure}

\subsection{Combinatorial map drawer used in this article}\label{drawer}

We have endeavored to build a small drawer for combinatorial maps using the simplest way by placing points supplied with edge-ends with corners for vertices in the plane and uniting them by edges from edge rotation according \ref{edge}. The edge ends first was united by rectangular lines, which  afterwards were smoothed into curve-like edges. The drawer was supplied with manual operation to move vertex in the plane, that was supported with corrector of all rest edge lines with respect to this relocated vertex.

\begin{figure}[h]
\begin{center}
\scalebox{.30}{\includegraphics{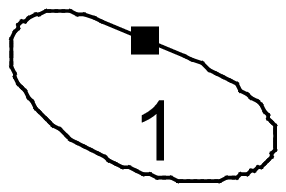}}
\scalebox{.30}{\includegraphics{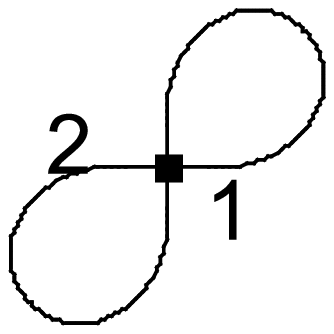}}
\scalebox{.30}{\includegraphics{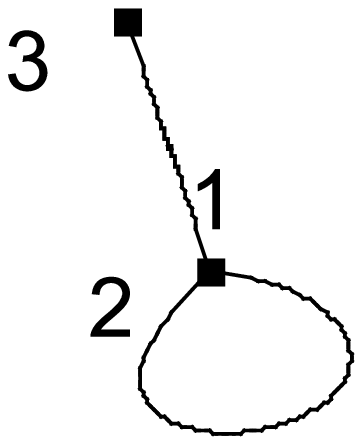}}
\scalebox{.30}{\includegraphics{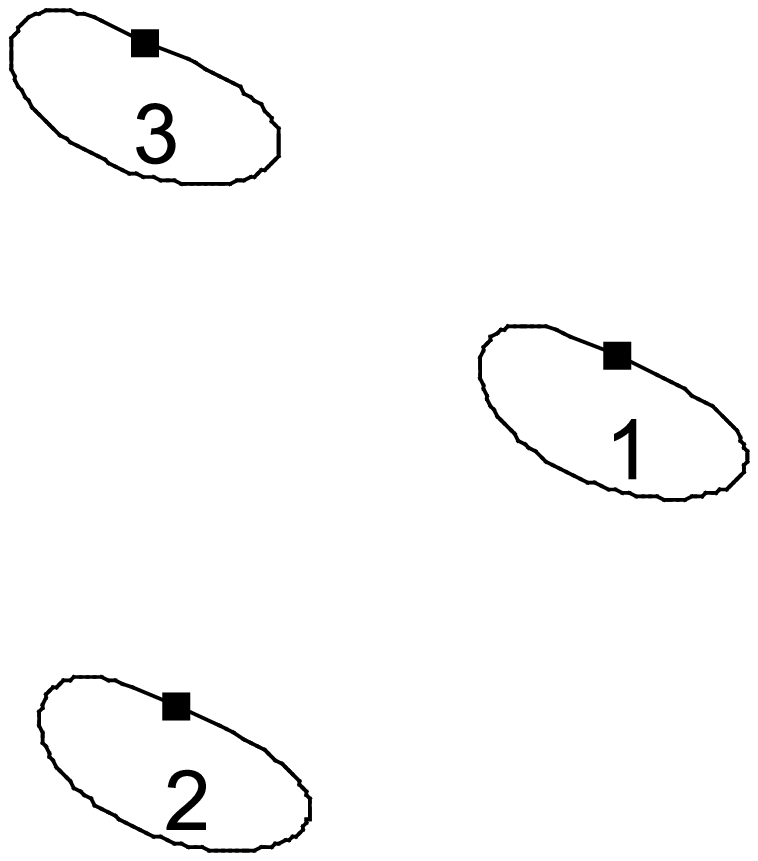}}
\caption{Four examples of small partial maps: 1) $(id_1, id_1)$; 2) $((12),(12))$; 3) $((12),(123))$; 4) $(id_3,id_3)$ }
\label{fig10}
\end{center}
\end{figure}

The drawer used in this article was built by Paulis \c{K}ikusts. The experience used here was typical for graph drawing area, see e.g., \cite{ki 96,fre 01,fre 02}. See pictures of combinatorial maps and partial maps in figures \ref{fig7}, \ref{fig6}, \ref{fig8}, \ref{fig10}, \ref{fig11} and \ref{fig12}.

In fig. \ref{fig10} examples of simplest partial maps are given. In fig. \ref{fig6} an example of {\em 4-critical multigraph} from \cite{ze 12} implemented planar and coded as map is drawn. Here, map is normalized with respect to its knot, and green and red cycles that cover corner set are clearly seen. In fig. \ref{fig7} drawing examples of two arbitrary generated maps are seen.

\begin{figure}[h]
\begin{center}
\scalebox{.22}{\includegraphics{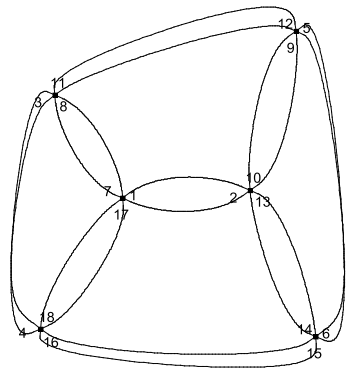}}
\caption{Prism map ($P$) with cut out faces as example of partial map $(P,\pi)$.}
\label{fig11}
\end{center}
\end{figure}

Similarly as in rotational case we are to choose two notions  from triple - points, lines and areas - to be varied independently. Tradition says that we leave areas to be determined by first two. Thus, our rotational prevalence doesn't exceed this border of point-line-area geometry. The break into this area would need new ideas. By the way, on simple way of uniforming these three geometrical quantities in a geometry of areas is suggested by cubic combinatorial maps in the approach of C.H.C. Little, see \cite{lit 88,bo li 95}.

\begin{figure}[h]
\begin{center}
\scalebox{.30}{\includegraphics{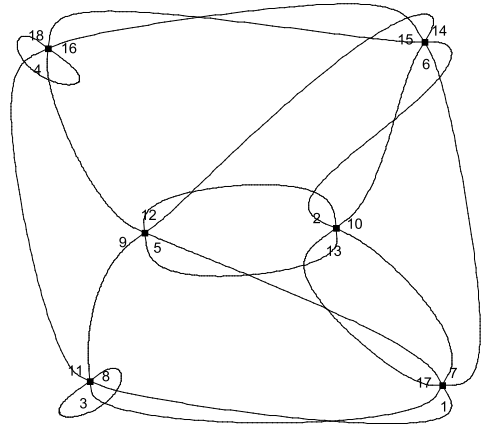}}
\caption{Partial map with two hyperedges of size 5 and 13 correspondingly defined by pair of permutations  $(1 \bar{7} 7 )( 2 \bar{0} \bar{3} )( 3 \bar{1} 8 )( 4 \bar{8} \bar{6} )( 6 \bar{5} \bar{4} )( 5 9 \bar{2})$ and $(1 3 8 )( 2 \bar{2} \bar{4} )( 4 \bar{1} 9 )( \bar{5} \bar{6} \bar{8} )( 5 \bar{3} \bar{7} )( 7 6 \bar{0})$. }
\label{fig12}
\end{center}
\end{figure}

\subsection{Combinatorial maps as combinatorial structure of algorithms and data structures}

One way to widen the application of rotational relations would be to remember that rotations work as sort of theorems starting from rotational data structures, cycles, graphs, constellations, and going on to operations and rotational theorems, and more daring theoretical principles, as in the case of Riemann geometry. It may find applications in how we build computer algorithms and before all the graph algorithms and maybe starting with implementing permutational calculus itself. First author of this article tried to use such philosophy in programming of a graph algorithm with but some practical outcome in somewhat good organized program.

However, rotational prevalence need be supported with theoretical investigations. We need to develop combinatorial map theory itself. Application of partial maps particularly might be useful there. Because of this possibility to visualize maps and especially partial maps would be very useful in these efforts.

\section{Conclusions}

Combinatorial map approach as purely combinatorial may serve as good example for graph theory and applications that combinatorial approaches may be used not only as theoretical support but as direct operational tools as well. By building operational outset of some systems combinatorial base may be used not only as required invariants of the elements of system, but the base for its structure itself already on level of its design and implementation. In this approach we suggest to use rotations-before-edges-faces-and-vertices approach as some rotational prevalence principle. We ask how we could apply this rotational prevalence in real conditions of graph drawing systems.

\section{Acknowledgements}

 We thank doctoral students of second author  Rihards Opmanis and Rudolfs Opmanis for their help in building map drawer program.

\end{document}